\def\cqfd{
{\hfill
\kern 6pt\penalty 500
\raise -1pt\hbox{\vrule\vbox to 5pt{\hrule width 4pt
\vfill\hrule}\vrule}}
\break}
\long\def\eg#1{{\color{green}#1}}
\long\def\eg#1{}
\newtheorem{theorem}{Theorem}
\newtheorem{proposition}[theorem]{Proposition}
\newtheorem{corollary}[theorem]{Corollary}
\begin{document}

\date{\today}

\title[Jacobians  surfaces]{The minimum and maximum number of rational points on Jacobian surfaces over finite fields}
\author{Safia Haloui}
\address{Institut de Math\'ematiques de Luminy, Marseille, France}

\email{haloui@iml.univ-mrs.fr}

\subjclass[2000]{14H40, 14G15, 14K15, 11G10, 11G25.}

\keywords{Abelian varieties over finite fields, jacobians, number of rational points.}

\begin{abstract}
We give some bounds on the numbers of rational points on abelian varieties and jacobians  varieties over finite fields. The main result is that we determine the maximum and minimum number of rational points on jacobians varieties of dimension 2.
\end{abstract}

\maketitle

%%%%%%%%%%%%%%%%%%%%%%%%%%%%%%

% INTRODUCTION

%%%%%%%%%%%%%%%%%%%%%%%%%%%%%%%

\section{Introduction}

We are interested in the number of rational points over finite felds on particular abelian varieties, namely jacobian varieties. 
In particular, for given integers $q=p^e$ ($p$ prime and $e\geq 1$) and $g\geq 1$, we consider the quantities
$$J_q(g)=\max_{X}\# J_X(\mathbb{F}_q)\quad \mbox{ and }\quad j_q(g)=\min_{X}\# J_X(\mathbb{F}_q)$$
where $X$ runs over the set of (absolutely irreducible, smooth, projective, algebraic) curves of genus $g$ defined over the finite field $\mathbb{F}_q$ and $J_X$ is the jacobian of $X$.

After giving some general bounds for these quantities, we determine the values of $J_q(2)$ and $j_q(2)$.

\bigskip

Let $A$ be an abelian variety defined over $\mathbb{F}_q$ of  dimension $g$. The characteristic polynomial $f_A(t)$ of $A$ is defined to be the characteristic polynomial of its Frobenius endomorphism. This is a polynomial with integer coefficients and its set of roots has the form 
$$\{\omega_1,\overline{\omega_1},\dots ,\omega_g, \overline{\omega_g}\}$$
 with $\vert\omega_i\vert=\sqrt{q}$ (Riemann Hypthesis for abelian varieties proved by Weil). For  $1\leq i\leq g$, we set  $x_i=-(\omega_i+\overline{\omega_i})$ and we refer  the sum $\sum_{i=1}^g(\omega_i+\overline{\omega_i})=-\sum_{i=1}^gx_i$ as the trace of $A$. 
 
The data of the $x_i$'s is equivalent to the data of the isogeny class of $A$ (by the Honda-Tate Theorem); we will say that $A$ has type $[x_1,\dots ,x_g]$. The type of an absolutely irreducible, smooth, projective, algebraic curve over $\mathbb{F}_q$ (as defined in \cite{lau1}) is the type of its jacobian.

The number of rational points on $A$ is given by
$$\#A(\mathbb{F}_q)=f_A(1),$$
and since 
$$f_A(t)=\prod_{i=1}^g(t-\omega_i)(t-\overline{\omega_i})=\prod_{i=1}^g(t^2+x_it+q),$$
we obtain
\begin{eqnarray}
\# A(\mathbb{F}_q)=\prod_{i=1}^{g}(q+1+x_i). \label{card}
\end{eqnarray}

%%%%%%%%%%%%%%%%%%%%%%%%%%%%%%%%%%%%%%%%%%%%%%%%%%%%%%%

% ABELIAN VARIETIES

%%%%%%%%%%%%%%%%%%%%%%%%%%%%%%%%%%%%%%%%%%%%%%%%%%%%%%%

\section{Abelian varieties}

Let $A$ be an abelian variety over $\mathbb{F}_q$ of  dimension $g$ and of type $[x_1,\dots ,x_g]$. Since each $x_i$ has an absolute value less than or equal to $2\sqrt{q}$ by the Riemann Hypothesis, we obtain from (\ref{card}) the well-known  Weil bounds  for abelian varieties:
%\begin{eqnarray}
$$(q+1-2\sqrt{q})^g\leq \# A(\mathbb{F}_q)\leq(q+1+2\sqrt{q})^g.$$ %\label{weilm}
%\end{eqnarray}

We can derive an upper bound for $ \# A(\mathbb{F}_q)$ which depends on the trace of $A$. Indeed, recall that, if $c_1,\ldots, c_n$ are non negative real numbers,  the arithmetic-geometric mean states that
$$\sqrt[n]{c_1\ldots c_n}\leq\frac{1}{n}(c_1+\cdots +c_n)$$
with equality if and only if the $c_i$'s are equal.
Applying this inequality to  (\ref{card}), we get the following proposition, independently shown by Quebbemann \cite{queb} and Perret \cite{per}:

\begin{proposition}[Perret-Quebbemann]\label{bornetr}We have
$$\# A(\mathbb{F}_q)\leq \Bigl(q+1+\frac{\sum_{i=1}^g x_i}{g}\Bigr)^g$$
with equality if and only if the $x_i$'s are equal.
\end{proposition}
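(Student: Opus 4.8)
The plan is to apply the arithmetic-geometric mean inequality directly to the factored expression for $\#A(\mathbb{F}_q)$ given in equation (\ref{card}). Set $c_i = q+1+x_i$ for $1\leq i\leq g$. The first thing I would check is that these quantities are legitimately non-negative: since $|x_i|\leq 2\sqrt{q}$ by the Riemann Hypothesis, we have $q+1+x_i \geq q+1-2\sqrt{q} = (\sqrt{q}-1)^2 \geq 0$, so the $c_i$ are non-negative real numbers and the AM-GM inequality applies. (One should perhaps note the degenerate case $q=1$ does not arise since $q=p^e$ with $p$ prime.)

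Next, I would invoke AM-GM with $n=g$: it gives $\sqrt[g]{c_1\cdots c_g}\leq \frac{1}{g}(c_1+\cdots+c_g)$, hence $c_1\cdots c_g \leq \bigl(\frac{1}{g}\sum_{i=1}^g c_i\bigr)^g$. Now $\sum_{i=1}^g c_i = g(q+1) + \sum_{i=1}^g x_i$, so $\frac{1}{g}\sum_{i=1}^g c_i = q+1+\frac{\sum_{i=1}^g x_i}{g}$. Substituting the left-hand side via (\ref{card}), namely $\#A(\mathbb{F}_q) = \prod_{i=1}^g c_i$, yields exactly the claimed inequality
$$\#A(\mathbb{F}_q) \leq \Bigl(q+1+\frac{\sum_{i=1}^g x_i}{g}\Bigr)^g.$$

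Finally, for the equality statement I would appeal to the equality case of AM-GM quoted in the excerpt: equality holds if and only if all the $c_i$ are equal, which is equivalent to all the $x_i$ being equal (since $c_i = q+1+x_i$ is an affine function of $x_i$). This is essentially immediate.

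There is no real obstacle here — the proposition is a one-line consequence of AM-GM once the factorization (\ref{card}) is in hand. The only point requiring a moment's care is confirming non-negativity of the factors $q+1+x_i$ so that AM-GM is applicable; everything else is routine algebraic rearrangement.
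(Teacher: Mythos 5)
Your proof is correct and is exactly the argument the paper uses: applying the arithmetic-geometric mean inequality to the factorization $\#A(\mathbb{F}_q)=\prod_{i=1}^g(q+1+x_i)$, with the non-negativity of the factors guaranteed by $|x_i|\leq 2\sqrt{q}$. The paper leaves these routine verifications implicit; your write-up simply fills them in.
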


We set $m=[2\sqrt{q}]$. Using the arithmetic-geometric mean, Serre  \cite{serre0} proved that
\begin{eqnarray}
\sum_{i=1}^g x_i\leq gm, \label{trace}
\end{eqnarray}
and so Proposition \ref{bornetr} implies that
$$\# A(\mathbb{F}_q)\leq(q+1+m)^g.$$

It is natural to ask whether $\# A(\mathbb{F}_q)$ has a lower bound of this kind, and the answer turns out to be yes.

\begin{theorem}\label{SerreWeil}
Let $A$ be an abelian variety defined over $\mathbb{F}_q$ of  dimension $g$. Then
$$(q+1-m)^g\leq\# A(\mathbb{F}_q)\leq(q+1+m)^g.$$
The first inequality is an equality if and only if the $x_i$'s are equal to $-m$ and the second if and only if they are equal to $m$.
\end{theorem}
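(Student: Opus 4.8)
The upper bound in the statement, and its equality case, are already in hand: they are what was obtained above by feeding Serre's inequality (\ref{trace}) into Proposition~\ref{bornetr}, equality forcing all the factors $q+1+x_i$ of (\ref{card}) to be equal with common value $q+1+m$. So the real task is the lower bound together with its equality case, and for this (\ref{trace}) will in fact not be used.

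The plan is to split each factor of (\ref{card}) as $q+1+x_i=(q-m)+(m+1+x_i)$ and to apply the superadditivity of the geometric mean: for positive reals $a_1,\dots,a_g$ and $b_1,\dots,b_g$,
$$\Bigl(\prod_{i=1}^{g}(a_i+b_i)\Bigr)^{1/g}\ \geq\ \Bigl(\prod_{i=1}^{g}a_i\Bigr)^{1/g}+\Bigl(\prod_{i=1}^{g}b_i\Bigr)^{1/g},$$
with equality exactly when $(a_i)$ and $(b_i)$ are proportional; this drops out of the arithmetic--geometric mean applied to the $g$ numbers $a_i/(a_i+b_i)$ and to the $g$ numbers $b_i/(a_i+b_i)$, followed by adding. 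I would take $a_i=q-m$ and $b_i=m+1+x_i$. This is legitimate because $q-m\geq 0$ (were $m\geq q+1$ we would get $m^{2}\geq(q+1)^{2}>4q$, contradicting $m\leq 2\sqrt q$) and because $m+1+x_i>0$ (since $|x_i|\leq 2\sqrt q<m+1$ by the Riemann Hypothesis). The one non-formal ingredient is that $\prod_{i}(m+1+x_i)$ is a \emph{positive integer}: it is the value at $m+1$ of the real Weil polynomial $h_A(t)=\prod_{i}\bigl(t-(\omega_i+\overline{\omega_i})\bigr)\in\mathbb{Z}[t]$ (the multiset of the $\omega_i+\overline{\omega_i}$ is stable under $\Gal(\overline{\mathbb{Q}}/\mathbb{Q})$ and consists of algebraic integers), and each factor $m+1+x_i$ is positive. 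Hence $\prod_{i}(m+1+x_i)\geq 1$, and superadditivity gives
$$\#A(\mathbb{F}_q)^{1/g}\ \geq\ (q-m)+\Bigl(\prod_{i=1}^{g}(m+1+x_i)\Bigr)^{1/g}\ \geq\ (q-m)+1\ =\ q+1-m .$$

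It remains to discuss equality. When $q>m$, that is $q\geq 5$, both inequalities just used must be equalities: the second forces $\prod_i(m+1+x_i)=1$, and, since $q-m>0$, the equality case of superadditivity forces all the numbers $m+1+x_i$ to coincide; together these give $m+1+x_i=1$, i.e. $x_i=-m$ for all $i$, as claimed. The delicate point is the range $q\in\{2,3,4\}$, where $q=m$ and the assertion is only $\#A(\mathbb{F}_q)\geq 1$: there the superadditivity step degenerates into an identity and carries no information about equality, so the equality statement has to be handled separately by arithmetic means, from the integrality of $h_A$ together with the confinement of its roots to $[-2\sqrt q,2\sqrt q]$; this case analysis, rather than the clean inequality above, is where I expect the real work (and any subtlety) to lie.
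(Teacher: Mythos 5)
Your argument for the inequality is correct and takes a genuinely different route from the paper's. The paper proves $t_k\geq\binom{g}{k}$ for \emph{every} elementary symmetric function $t_k$ of the numbers $m+1+x_i$, by applying the arithmetic--geometric mean to the positive integer $T_k=\prod_{|S|=k}\prod_{i\in S}(m+1+x_i)$, and then expands $(q-m+1)^g=\sum_k\binom{g}{k}(q-m)^{g-k}\leq\sum_k t_k(q-m)^{g-k}=\prod_i(q+1+x_i)$. You use only the single integer $t_g=\prod_i(m+1+x_i)\geq 1$ (the paper's $T_k$ for $k=g$) together with superadditivity of the geometric mean applied to the constant sequence $q-m$; this is leaner, it isolates the one arithmetic input actually needed (a Galois-stable product of positive algebraic integers is a positive rational integer), and for $q>m$ it hands you the equality case for free via the proportionality condition. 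Both arguments collapse to the tautology $\#A(\mathbb{F}_q)\geq 1$ exactly when $q=m$, i.e.\ $q\in\{2,3,4\}$.

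Concerning the case you deferred: your instinct that the subtlety lives there is correct, but the situation is worse than ``real work remains''--- the equality characterization of the theorem is actually \emph{false} for $q\in\{2,3,4\}$, so no argument can close that gap. For $q=2$ one has $m=2$ and $q+1-m=1$; take $(a_1,a_2)=(-3,5)$, which satisfies (\ref{a12}) and has $p=2\nmid a_2$, so by the R\"uck--Maisner--Nart criterion quoted in the paper it is the characteristic polynomial of an (ordinary) abelian surface over $\mathbb{F}_2$. Its type is $[\frac{-3+\sqrt5}{2},\frac{-3-\sqrt5}{2}]\neq[-2,-2]$, yet $f_A(1)=1-3+5-6+4=1=(q+1-m)^2$. (This example is visible in the paper's own Table \ref{tabmin}: the row $a_1=-2m+1$ with type $[-m+\frac{1+\sqrt5}{2},-m+\frac{1-\sqrt5}{2}]$ has $(q+1-m)^2+(q+1-m)-1$ points, which equals $(q+1-m)^2$ precisely when $q=m$.) Analogous examples exist for $q=3$ and $q=4$. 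Note also that the paper's proof stops at the inequality and never addresses the equality clause at all, so you are not overlooking an argument the paper supplies. For $q\geq 5$ your treatment of both the bound and the equality case is complete and correct.
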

\begin{proof}Only the first inequality has to be proved (note the similarity of the following proof with the one of (\ref{trace})).

For $k=0,\dots ,g $ let $t_k$ be the $k$th symmetric function of the $(m+1+x_i)$'s, for $i=1,\dots ,g$ (i.e. $\prod_{i=1}^g(t+(m+1+x_{i}))=\sum_{k=0}^gt_kt^{g-k}$).

Let $k\in\{ 1,\dots ,g\}$ and define the quantity
$$T_k=\prod_{\{ i_1,\dots ,i_k\}\subseteq\{ 1,\dots ,g\} }\prod_{j=1}^k(m+1+x_{i_j}).$$
$T_k$ is a non zero integer (since it is an algebraic integer invariant by $\mathbb{Q}$-automorphisms  of $\mathbb{C}$). Thus
\begin{eqnarray}
T_k\geq1. \label{1}
\end{eqnarray}
On the other hand, using the arithmetic-geometric mean, we obtain
\begin{eqnarray}
T_k^{1/\binom{g}{k}}\leq\frac{1}{\binom{g}{k}}\sum_{\{ i_1,\dots ,i_k\}\subseteq\{ 1,\dots ,g\} }\prod_{j=1}^k(m+1+x_{i_j})=\frac{1}{\binom{g}{k}}t_k. \label{2}
\end{eqnarray}
Combining (\ref{1}) and (\ref{2}), we find
\begin{eqnarray}
\binom{g}{k}\leq t_k. \label{3}
\end{eqnarray}
Moreover, (\ref{3}) remains true for $k=0$.

Multiplying  both sides of (\ref{3}) by $(q-m)^{g-k}$ and adding the inequalities obtained for $k=0,\dots ,g $ we obtain
%\begin{eqnarray}
$$\sum_{k=0}^g\binom{g}{k}(q-m)^{g-k}\leq \sum_{k=0}^gt_k(q-m)^{g-k} $$%\label{4}
%\end{eqnarray}
from which the result follows.
\end{proof}

 %%%%%%%%%%%%%%%%%%%%%%%%%%%%%%%%%%%%%%%%%%%%%%%%%%%%%%%
 
 % JACOBIANS
 
%%%%%%%%%%%%%%%%%%%%%%%%%%%%%%%%%%%%%%%%%%%%%%%%%%%%%%%
\section{Jacobians}
Since jacobians are particular cases of abelian varieties, the bounds of the previous section can be applied. Using Theorem \ref{SerreWeil}, we have:
\begin{corollary} \label{WeilJacobian}
$$(q+1-m)^g\leq j_q(g)\leq J_q(g)\leq(q+1+m)^g.$$
\end{corollary}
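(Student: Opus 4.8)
The plan is to read Corollary \ref{WeilJacobian} off directly from Theorem \ref{SerreWeil}, exploiting the fact that a jacobian is a particular abelian variety. Concretely: let $X$ be any (absolutely irreducible, smooth, projective) curve of genus $g$ over $\mathbb{F}_q$. Its jacobian $J_X$ is an abelian variety over $\mathbb{F}_q$ of dimension $g$, so Theorem \ref{SerreWeil} applies verbatim and gives
$$(q+1-m)^g\leq \# J_X(\mathbb{F}_q)\leq (q+1+m)^g .$$

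Next I would pass to the extrema over $X$. Since the lower bound $(q+1-m)^g$ is the same for every curve of genus $g$ over $\mathbb{F}_q$, it is in particular at most the smallest of the values $\# J_X(\mathbb{F}_q)$, i.e. $(q+1-m)^g\leq j_q(g)$; symmetrically, the uniform upper bound yields $J_q(g)\leq(q+1+m)^g$. The remaining inequality $j_q(g)\leq J_q(g)$ is simply the fact that a minimum does not exceed a maximum, valid as soon as the index set is non-empty — which it is, since for every $g\geq 1$ there exists a curve of genus $g$ over $\mathbb{F}_q$ (this is also what makes $j_q(g)$ and $J_q(g)$ well defined). Chaining the three inequalities produces the statement.

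I do not anticipate any genuine obstacle here: the corollary is a straight specialization of the theorem, and the only point deserving a word is that, unlike in Theorem \ref{SerreWeil}, no equality criterion is claimed, precisely because an abelian variety of type $[\pm m,\dots,\pm m]$ attaining the extreme value $(q+1\pm m)^g$ need not be a jacobian. Determining when these bounds are in fact met by a jacobian, and sharpening them when they are not, is exactly the problem addressed in the sequel, at least for $g=2$.
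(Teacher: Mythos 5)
Your proof is correct and is exactly the paper's argument: the corollary is obtained by applying Theorem \ref{SerreWeil} to the jacobian, which is an abelian variety of dimension $g$, and then taking the extrema over all genus-$g$ curves. Nothing further is needed.
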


Since the number of rational points on a curve $X$ of genus $g$ and of type $[x_1,\ldots,x_g]$ is related to the trace of its Frobenius by

$$\# X({\mathbb F}_{q})=q+1+\sum_{i=1}^gx_i,$$
Proposition \ref{bornetr} gives us
\begin{eqnarray}
\# J_X(\mathbb{F}_q)\leq \Bigl(q+1+\frac{\# X(\mathbb{F}_q)-(q+1)}{g}\Bigr)^g \label{pirrete}
\end{eqnarray}
and we find again the bound proved by Quebbemann \cite{queb} and Perret \cite{per}. These bounds should be compared with those of 
Lachaud and Martin-Deschamps \cite{L-MD}:

$$(\sqrt{q}-1)^2\frac{q^{g-1}-1}{g}\frac{\# X({\mathbb F}_q)+q-1}{q-1}\leq \# J_X({\mathbb F}_q).$$
Moreover if $X$ admits a map of degree $d$ onto ${\mathbb P}^1$, we have

$$\# J_X({\mathbb F}_q)\leq e(2g\sqrt{e})^{d-1}q^g$$
where $e$ is the base of the natural logarithm.

The bound (\ref{pirrete}) is better than the Lachaud-Martin-Deschamps bound when $g$ is large and $q$ is fixed, except when $X$ has a small gonality with respect to the genus.

\bigskip

Let  $N_q(g)=\max_X\# X({\mathbb F}_q)$ where $X$ runs over the set of (absolutely irreducible, smooth, projective, algebraic) curves of genus $g$ defined over  $\mathbb{F}_q$. Then the bound (\ref{pirrete}) can be applied to give the following proposition.

\begin{proposition}\label{JqgetNqg}We have
$$J_q(g)\leq \Bigl(q+1+\frac{N_q(g)-(q+1)}{g}\Bigr)^g$$
with equality if and only if there exists an optimal curve (i.e. with $N_q(g)$ points) with only one Frobenius eigenvalue (with its conjugate).
\end{proposition}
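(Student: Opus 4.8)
The plan is to combine the inequality (\ref{pirrete}) with the definitions of $J_q(g)$ and $N_q(g)$, being careful to track the equality case through each step. First I would fix a curve $X$ of genus $g$ over $\mathbb{F}_q$ that attains $J_q(g)$, i.e.\ with $\# J_X(\mathbb{F}_q)=J_q(g)$, and of type $[x_1,\dots,x_g]$. Applying (\ref{pirrete}) gives
$$J_q(g)=\# J_X(\mathbb{F}_q)\leq\Bigl(q+1+\frac{\# X(\mathbb{F}_q)-(q+1)}{g}\Bigr)^g.$$
Since $\# X(\mathbb{F}_q)\leq N_q(g)$ and the function $u\mapsto(q+1+\tfrac{u-(q+1)}{g})^g$ is increasing in $u$ (its base is a positive real number, being $q+1+\tfrac1g\sum x_i\geq q+1-2\sqrt q>0$ for $q\geq5$, and one checks the small cases directly or notes the base is $\geq 1$ whenever the curve has at least one point), we obtain the displayed bound $J_q(g)\leq(q+1+\tfrac{N_q(g)-(q+1)}{g})^g$.

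Next I would analyse the equality case. Equality forces equality in both steps above. Equality in the monotonicity step forces $\# X(\mathbb{F}_q)=N_q(g)$, so $X$ is an optimal curve. Equality in (\ref{pirrete}) is exactly the equality case of Proposition \ref{bornetr} applied to $J_X$, which holds if and only if all the $x_i$ are equal; since $x_i=-(\omega_i+\overline{\omega_i})$ and $|\omega_i|=\sqrt q$, saying the $x_i$ are all equal is the statement that $X$ has only one Frobenius eigenvalue up to conjugation. This gives the "only if" direction.

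For the "if" direction, suppose there is an optimal curve $X$ (so $\# X(\mathbb{F}_q)=N_q(g)$) whose type is $[x,\dots,x]$ for a single value $x$. Then $\sum x_i=gx=\# X(\mathbb{F}_q)-(q+1)=N_q(g)-(q+1)$, so $x=\tfrac{N_q(g)-(q+1)}{g}$, and by (\ref{card}),
$$\# J_X(\mathbb{F}_q)=\prod_{i=1}^g(q+1+x_i)=(q+1+x)^g=\Bigl(q+1+\frac{N_q(g)-(q+1)}{g}\Bigr)^g.$$
Hence $J_q(g)\geq\# J_X(\mathbb{F}_q)$ equals the right-hand side; combined with the upper bound already proved, we get equality.

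The main obstacle is a minor but genuine one: justifying the monotonicity step, i.e.\ that the base $q+1+\tfrac1g(\#X(\mathbb{F}_q)-(q+1))$ is nonnegative (so that raising to the $g$th power preserves the inequality with $N_q(g)$), and more subtly that replacing $\#X(\mathbb{F}_q)$ by $N_q(g)$ in the optimality argument is compatible with the equality discussion — in particular that the optimal curve in the equality case is the very curve attaining $J_q(g)$. All of this is elementary; the substance of the proposition is really just repackaging Proposition \ref{bornetr}, so the proof should be short.
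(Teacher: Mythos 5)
Your proof is correct and is essentially the argument the paper intends (the paper simply notes that the bound (\ref{pirrete}) "can be applied"): apply (\ref{pirrete}) to a curve attaining $J_q(g)$, use strict monotonicity of the base to replace $\# X(\mathbb{F}_q)$ by $N_q(g)$, and read off the equality case from Proposition \ref{bornetr}. Your worry about positivity of the base is unnecessary: $q+1+\frac{1}{g}\sum x_i\geq q+1-2\sqrt{q}=(\sqrt{q}-1)^2>0$ for every prime power $q\geq 2$.
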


\bigskip
\noindent
{\bf Remarks.}

\medskip
\noindent
{\bf 1.} A curve that reaches the Serre-Weil bound, i.e. such that
$$\# X({\mathbb F}_q)=q+1+m$$
must  have $x_i= m$ for all $i$ (see \cite{lau2}). Thus, by Corollary \ref{WeilJacobian},  its jacobian is optimal i.e. has $J_q(g)$ points.

\medskip
\noindent
{\bf 2.} When $g=2$, a jacobian with $J_q(g)$ points is the jacobian of an optimal curve (i.e. with $N_q(g)$ points) but the jacobian of an optimal curve doesn't always have $J_q(g)$ points. For example, when $q=3$, there exist genus $2$ curves with $N_3(2)=8$ points whose jacobian have $36$, $35$, $34$ and $33$ points (it follows from the results of \cite{hnr}).

\vskip1cm

The case of jacobians of dimension 1 corresponds to elliptic curves. The values of $J_q(1)$ and $j_q(1)$ where calculated by Waterhouse \cite{water}.

Recall that $q=p^e$.
\begin{proposition}\label{J_q(1)}
\noindent
\begin{enumerate}
\item $J_q(1)$ is equal to
\begin{itemize}
\item $(q+1+m)$ if $e=1$, $e$ is even or $p\not\vert m$
\item $(q+m)$ otherwise.
\end{itemize}
\item $j_q(1)$ is equal to
\begin{itemize}
\item $(q+1-m)$ if $e=1$, $e$ is even or $p\not\vert m$
\item $(q+2-m)$ otherwise.
\end{itemize}
\end{enumerate}
\end{proposition}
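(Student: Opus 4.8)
The plan is to reduce the statement to Waterhouse's classification of isogeny classes of elliptic curves over $\mathbb{F}_q$ and then read off the two extremal traces. A smooth curve $X$ of genus $1$ over $\mathbb{F}_q$ has a rational point (by the Weil bound its number of points is at least $q+1-2\sqrt{q}>0$), so it is isomorphic to its jacobian; hence $J_q(1)$ and $j_q(1)$ are the maximum and the minimum of $\#X(\mathbb{F}_q)=q+1-a$, where $a=-x_1$ runs over the integers occurring as the Frobenius trace of an elliptic curve over $\mathbb{F}_q$. By Honda--Tate together with Waterhouse's determination of the relevant Weil numbers \cite{water}, an integer $a$ with $|a|\le 2\sqrt{q}$ occurs if and only if either $\gcd(a,p)=1$, or $a$ lies in a short list of supersingular values: $a=0$; $a=\pm 2\sqrt{q}$ when $e$ is even; $a=\pm\sqrt{q}$ when $e$ is even and $p\not\equiv 1\pmod 3$; $a=\pm p^{(e+1)/2}$ when $e$ is odd and $p\in\{2,3\}$ (with $a=0$ excluded when $e$ is even and $p\equiv 1\pmod 4$). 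The first observation is that each of these conditions is invariant under $a\mapsto -a$, so the set $S$ of admissible traces is symmetric about $0$; writing $\mu=\max S$, it follows that $J_q(1)=q+1+\mu$ and $j_q(1)=q+1-\mu$, and the whole proposition reduces to showing that $\mu=m$ in the three listed cases and $\mu=m-1$ otherwise.

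Next I would use that $m=[2\sqrt{q}]$ is the largest integer not exceeding $2\sqrt{q}$, so $\mu\le m$, with equality exactly when $m\in S$, and then split into cases. If $p\not\vert m$, then $\gcd(m,p)=1$, so $m\in S$ by the ordinary case and $\mu=m$. If $e$ is even, then $q$ is a perfect square and $m=2\sqrt{q}$, which is one of the supersingular values, so again $\mu=m$. If $e=1$ and $p\vert m$, then $p\vert[2\sqrt{p}]$ forces $p\le 2\sqrt{p}$, hence $p\in\{2,3\}$, and in both cases $m=[2\sqrt{p}]=p=p^{(e+1)/2}$, which is a supersingular value (the $e$ odd, $p\in\{2,3\}$ family), so $\mu=m$. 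In each of these three cases substitution gives $J_q(1)=q+1+m$ and $j_q(1)=q+1-m$.

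The remaining case is $e$ odd with $e\ge 3$ and $p\vert m$, where the point is that $m\notin S$, hence $\mu=m-1$. Here $m$ is excluded by the ordinary condition, and since $q$ is not a square the only relevant supersingular values are $0$ and, when $p\in\{2,3\}$, $\pm p^{(e+1)/2}$; as $m\ge 2>0$, the only thing left to rule out is $m=p^{(e+1)/2}$. This I would do by noting that $2\sqrt{p}>p$ for $p\in\{2,3\}$, so that $2\sqrt{q}-p^{(e+1)/2}=p^{e/2}(2-\sqrt{p})>1$ whenever $e\ge 3$, whence $m=[2\sqrt{q}]>p^{(e+1)/2}$. Therefore $m\notin S$ and $\mu\le m-1$; conversely $\gcd(m-1,p)=1$ (because $p\vert m$) and $m-1<2\sqrt{q}$, so $m-1\in S$ and $\mu=m-1$, giving $J_q(1)=q+m$ and $j_q(1)=q+2-m$. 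The only non-formal step is this last inequality, which guarantees that the extremal supersingular classes do not reach the Weil bound; together with the small check that the $e=1$ subcase lands inside the supersingular family (possible only for $p=2,3$), everything else is routine bookkeeping with Waterhouse's list.
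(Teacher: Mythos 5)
Your proof is correct. The paper itself gives no argument for this proposition (it simply cites Waterhouse), and your derivation — reducing to Waterhouse's list of admissible Frobenius traces, using the symmetry $a\mapsto -a$ to get $J_q(1)=q+1+\mu$ and $j_q(1)=q+1-\mu$ with $\mu=\max S$, and then checking that $\mu=m$ precisely in the three listed cases (the key points being that $e=1$ with $p\mid m$ forces $p\in\{2,3\}$ and $m=p^{(e+1)/2}$, while for odd $e\ge 3$ one has $2\sqrt{q}-p^{(e+1)/2}=p^{e/2}(2-\sqrt{p})>1$, so $m>p^{(e+1)/2}$ and $\mu=m-1$) — is exactly the intended deduction from that reference.
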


%%%%%%%%%%%%%%%%%%%%%%%%%%%%%%%%%%%%%%%%%%%%%%%%%%%%%%%%

% ABELIAN SURFACES

%%%%%%%%%%%%%%%%%%%%%%%%%%%%%%%%%%%%%%%%%%%%%%%%%%%%%%%%

\section{Abelian surfaces over finite fields}
In order to determine $J_q(2)$, we focus on abelian varieties of dimension 2.
Let $A$ be an abelian surface over $ \mathbb{F}_q$ of type $[x_1,x_2]$.

Its characteristic polynomial $f_A(t)$ has the form
$$f_A(t)=t^4+a_1t^3+a_2t^2+qa_1t+q^2$$
with $a_1$ and $a_2$ integers.

We consider the real Weil polynomial of $A$:
$${\tilde f}_A(t)=(t+x_1)(t+x_2)=t^2+a_1t+a_2-2q.$$

By elementary computations, we can show that (see \cite{ruck} or \cite{mana}) the fact that the roots of $f_A(t)$ are $q$-Weil numbers (i.e. algebraic integers such that their images under every complex embedding have absolute value $\sqrt{q}$) is equivalent to

\begin{eqnarray}
\vert a_1\vert\leq 2m\quad \mbox{ and }\quad 2\vert a_1\vert\sqrt{q}-2q\leq a_2\leq\frac{a_1^2}{4}+2q. \label{a12}
\end{eqnarray}

Moreover,
\begin{eqnarray}
\# A(\mathbb{F}_q)=q^2+1+(q+1)a_1+a_2.\label{abeliansurface}
\end{eqnarray}
Table  \ref{tabmax} gives all the possibilities for $(a_1,a_2)$ such that $a_1\geq 2m-2$.

\begin{table}[htbp]
\begin{center}
\begin{tabular}{| l l | l | l |}
\hline
$a_1$ & $a_2$ & Type & Nb of pts \\
\hline
$2m$ & $m^2+2q$ & $[m,m]$ & $(q+1+m)^2$ \\
\hline
$2m-1$ & $m^2-m+2q$ & $[m,m-1]$ & $(q+1+m)(q+m)$ \\
 & $m^2-m-1+2q$ & $[m+\frac{-1+\sqrt{5}}{2},m+\frac{-1-\sqrt{5}}{2}]$ & $(q+1+m+\frac{-1+\sqrt{5}}{2})(q+1+m+\frac{-1-\sqrt{5}}{2})$ \\
\hline 
$2m-2$ & $m^2-2m+1+2q$ & $[m-1,m-1]$ & $(q+m)^2$ \\
 & $m^2-2m+2q$ & $[m,m-2]$ & $(q+1+m)(q-1+m)$ \\
 & $m^2-2m-1+2q$ & $[m-1+\sqrt{2},m-1-\sqrt{2}]$ & $(q+m+\sqrt{2})(q+m-\sqrt{2})$ \\
 & $m^2-2m-2+2q$ & $[m-1+\sqrt{3},m-1-\sqrt{3}]$ & $(q+m+\sqrt{3})(q+m-\sqrt{3})$ \\
\hline
\end{tabular}
\end{center}
\caption{Couples $(a_1,a_2)$ maximizing the number of points on $A$}
\label{tabmax}
\end{table}

The numbers of points are classified in decreasing order and an abelian variety with $(a_1,a_2)$ not in the table has a number of points strictely less than the values of the table. Indeed, if $-2m\leq a_1< 2m-2$, we have

\begin{eqnarray*}
(q+1)a_1+a_2 & \leq & [(q+1)a_1+\frac{{a_1}^2}{4}+2q]\\
& \leq & [(q+1)(2m-3)+\frac{(2m-3)^2}{4}+2q]\\
& = & (q+1)(2m-3)+m^2-3m+2+2q\\
& = & (q+1)(2m-2)+(m^2-2m-2+2q)+(3-(q+m))\\
& < & (q+1)(2m-2)+(m^2-2m-2+2q).
\end{eqnarray*}
For the second inequality, note that the function $x\mapsto (q+1)x+\frac{{x}^2}{4}$ is increasing on the interval $[-2m,2m-3]$.

\medskip

In the same way, we build the table of couples $(a_1,a_2)$ with $a_1\leq -2m+2$ (note that the extremities of the interval containing $a_2$ given by (\ref{a12}) depend only on the value of $a_1$, hence the $a_2$'s are the same as in the previous table).

\begin{table}[htbp]
\begin{center}
\begin{tabular}{| l l | l | l |}
\hline
$a_1$ & $a_2$ & Type & Nb of pts \\
\hline
$-2m$ & $m^2+2q$ & $[-m,-m]$ & $(q+1-m)^2$ \\
\hline
$-2m+1$ & $m^2-m-1+2q$ & $[-m+\frac{1+\sqrt{5}}{2},-m+\frac{1-\sqrt{5}}{2}]$ & $(q+1-m+\frac{1+\sqrt{5}}{2})(q+1-m+\frac{1-\sqrt{5}}{2})$ \\
 & $m^2-m+2q$ & $[-m,-m+1]$ & $(q+1-m)(q+2-m)$ \\
\hline 
$-2m+2$ & $m^2-2m-2+2q$ & $[-m+1+\sqrt{3},-m+1-\sqrt{3}]$ & $(q+2-m+\sqrt{3})(q+2-m-\sqrt{3})$ \\
 & $m^2-2m-1+2q$ & $[-m+1+\sqrt{2},-m+1-\sqrt{2}]$ & $(q+2-m+\sqrt{2})(q+2-m-\sqrt{2})$ \\
 & $m^2-2m+2q$ & $[-m,-m+2]$ & $(q+1-m)(q+3-m)$ \\
 & $m^2-2m+1+2q$ & $[-m+1,-m+1]$ & $(q+2-m)^2$ \\
\hline
\end{tabular}
\end{center}
\caption{Couples $(a_1,a_2)$ minimizing the number of points on $A$}
\label{tabmin}
\end{table}

Again, the numbers of points are classified in increasing order and an abelian variety with $(a_1,a_2)$ not in the following table has a number of points strictly greater than the values of the table. Indeed, if $-2m+2< a_1\leq 2m$, we have

\begin{eqnarray*}
(q+1)a_1+a_2 & \geq & (q+1)a_1+2\vert a_1\vert\sqrt{q}-2q\\
& \geq &  (q+1)(-2m+3)+2(2m-3)\sqrt{q}-2q\\
& = &  (q+1)(-2m+2)+2(2m-3)\sqrt{q}-q+1\\
& = & (q+1)(-2m+2)+(m^2-2m+1+2q)-(m^2-2m+1+2q)+2(2m-3)\sqrt{q}-q+1\\
& = & (q+1)(-2m+2)+(m^2-2m+1+2q)-(m-1)^2+4(m-1)\sqrt{q}-4q+q+2\sqrt{q}+1\\
& = & (q+1)(-2m+2)+(m^2-2m+1+2q)-(m-1-2\sqrt{q})^2+(\sqrt{q}+1)^2\\
& > & (q+1)(-2m+2)+(m^2-2m+1+2q).
\end{eqnarray*}
For the second inequality, note that the function $x\mapsto (q+1)x+2\vert x\vert\sqrt{q}$ is increasing on $[-2m+3,2m]$ .

%%%%%%%%%%%%%%%%%%%%%%%%%%%%%%%%%%%%%%%%%%%%%%%%%%%%%%%%%%

% J_q(2)

%%%%%%%%%%%%%%%%%%%%%%%%%%%%%%%%%%%%%%%%%%%%%%%%%%%%%%%%%%

\section{Jacobians of dimension $2$}
 
In this section, we determine $J_q(2)$ and $ j_q(2)$.
%$$J_q(2)=\max_{X}\# J_X(\mathbb{F}_q)\quad \mbox{ and }\quad j_q(2)=\min_{X}\# J_X(\mathbb{F}_q)$$
%where $X$ runs over the set of  curves of genus $2$ defined over the finite field $\mathbb{F}_q$ and $J_X$ is the jacobian of $X$.

\medskip
Theorems \ref{Jq2} and \ref{jq2} will be proved in the following way:

\begin{enumerate}
\item We look at the highest row of Table \ref{tabmax} or \ref{tabmin} (depending on the theorem being proved).

\item Then we check if the corresponding polynomial is the characteristic polynomial of an abelian variety.

\item When it is the case, we check if this abelian variety is isogenous to a jacobian  variety.

\item 
When it is not the case, we look at the following row and we come back to the second step.

\end{enumerate}

For the second step, we use the results of R\"uck \cite{ruck} and Maisner-Nart-Howe \cite{mana} who solved the problem of describing characteristic polynomials of abelian surfaces, in particular the fact that if $(a_1,a_2)$ satisfy (\ref{a12}) and $p$ does not divide $a_2$ then the corresponding polynomial is the characteristic polynomial of an abelian surface. 

For the third step, we use \cite{hnr} where we can find a characterization of isogeny classes of abelian surfaces containing a jacobian.

\medskip

The determination of $J_q(2)$ in Theorem \ref{Jq2} is closely related to that of $N_q(2)$ as was done by Serre \cite{serre2}.

In order to simplify the proof of Theorem \ref{jq2}, we use the fact that given a curve of genus $2$, if we denote by $(a_1,a_2)$ the coefficients of its characteristic polynomial, there exists a curve (its quadratic twist) whose coefficients are $(-a_1,a_2)$. This allows us to adapt the proof of Theorem \ref{Jq2}.

\medskip

Let us recall the definition of special numbers introduced by Serre. An odd power $q$ of a prime number $p$ is \textit{special} if one of the following conditions is satisfied (recall that $m=[2\sqrt{q}]$):

\begin{enumerate}
\item $m$ is divisible by $p$,
\item there exists $x\in\mathbb{Z}$ such that $q=x^2+1$,
\item there exists $x\in\mathbb{Z}$ such that $q=x^2+x+1$,
\item there exists $x\in\mathbb{Z}$ such that $q=x^2+x+2$.
\end{enumerate}

In \cite{serre}, Serre asserts that if $q$ is prime then the only possible conditions are conditions (2) and (3). When $q$ is not a prime then condition (2) is impossible,  condition (3) is possible only if $q=7^3$ and condition (4) is possible only if $q=2^3$, $2^5$ or $2^{13}$. Moreover, using basic arithmetic, it can be shown (see \cite{lau1} for more details) that conditions (2), (3) and (4) are respectively equivalent to $m^2-4q=-4$, $-3$ and $-7$.

\bigskip
The complete set of possible values of $J_q(2)$ is given in the following theorem.

\begin{theorem}\label{Jq2}
\noindent
\begin{itemize}
\item[a)] If $q$ is a square, then  $J_q(2)$ equals:
\begin{itemize}
\item[$\bullet$] $(q+1+m)^2$ if $q\neq 4,9$
\item[$\bullet$] $55$ if $q=4$
\item[$\bullet$] $225$ if $q=9$
\end{itemize}
\noindent
\item[b)] If $q$ is not a square, then  $J_q(2)$ equals:
\begin{itemize}
\item[$\bullet$] $(q+1+m)^2$ if $q$ is not special
\item[$\bullet$] $(q+1+m+\frac{-1+\sqrt{5}}{2})(q+1+m+\frac{-1-\sqrt{5}}{2})$ if $q$ is special and $\{ 2\sqrt q\}\geq\frac{\sqrt 5 -1}{2}$
\item[$\bullet$] $(q+m)^2$ if $q$ is special, $\{ 2\sqrt q\}<\frac{\sqrt 5 -1}{2}$, $p\neq 2$ or $p\vert m$
\item[$\bullet$] $(q+1+m)(q-1+m)$ otherwise.
\end{itemize}
\end{itemize}
\end{theorem}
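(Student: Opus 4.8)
The proof runs exactly as announced after the statement: beginning at the top of Table~\ref{tabmax} we examine the rows in order, and for the number of points in a given row to be realised by a jacobian surface we need two things. First, the displayed characteristic polynomial must be that of some abelian surface over $\mathbb{F}_q$ — settled by R\"uck \cite{ruck} and Maisner-Nart-Howe \cite{mana}, which in particular guarantees realizability whenever $(a_1,a_2)$ satisfies (\ref{a12}) and $p\not\vert a_2$. Second, the corresponding isogeny class must contain a jacobian — settled by the classification of isogeny classes without a jacobian due to Howe-Nart-Ritzenthaler \cite{hnr}. Since the numbers of points in Table~\ref{tabmax} are strictly decreasing and, by the computation carried out just before that table, every pair $(a_1,a_2)$ satisfying (\ref{a12}) with $a_1<2m-2$ gives strictly fewer points than the last row, the first row passing both tests gives the exact value of $J_q(2)$, the upper bound being supplied by Corollary~\ref{WeilJacobian}. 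It is natural to separate the case where $q$ is a square (so $m=2\sqrt q$ and the top row is the supersingular class $(t+\sqrt q)^4$) from the case where it is not (so $m<2\sqrt q$ and the top row is $(t^2+mt+q)^2$), and within the latter to separate the special from the non-special $q$.

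If $q$ is not a square and is not special, then $p\not\vert m$, so by Proposition~\ref{J_q(1)} the elliptic curve of type $[m]$ exists and, what we really use, Serre's determination of $N_q(2)$ \cite{serre2} produces a genus~$2$ curve of type $[m,m]$; its jacobian has $(q+1+m)^2$ points, which already meets the bound of Corollary~\ref{WeilJacobian} (equality case of Proposition~\ref{JqgetNqg}). Hence $J_q(2)=(q+1+m)^2$ and the top row of Table~\ref{tabmax} is the answer. If $q$ is a square with $q\notin\{4,9\}$, the same conclusion follows because a maximal genus~$2$ curve exists (again \cite{serre2}), giving $J_q(2)=(q+1+2\sqrt q)^2=(q+1+m)^2$.

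If $q$ is not a square and is special, the top row fails: a jacobian with characteristic polynomial $(t^2+mt+q)^2$ would be the jacobian of a curve with $q+1+2m$ points, which does not exist as $q$ is special (and if moreover $p\vert m$ the surface $E\times E$ of type $[m,m]$ does not even exist, by Proposition~\ref{J_q(1)}). The next row, type $[m,m-1]$, fails too: the split isogeny class with real Weil polynomial $(t+m)(t+m-1)$ has no jacobian when $q$ is special, which one reads off \cite{hnr} (equivalently, no optimal curve is of this type). A short computation with $2\sqrt q=m+\{2\sqrt q\}$ shows that the next admissible line with $a_1=2m-1$, namely $a_2=m^2-m-1+2q$, satisfies (\ref{a12}) if and only if $\{2\sqrt q\}\ge\frac{\sqrt5-1}{2}$; when it does, $p\not\vert a_2$ (since $p\vert q$), so by \cite{mana} this is the characteristic polynomial of an abelian surface and by \cite{hnr} its isogeny class contains a jacobian, which yields the golden-ratio value. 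When $\{2\sqrt q\}<\frac{\sqrt5-1}{2}$ there is no admissible $a_2$ at $a_1=2m-1$ other than the already-excluded one, so we descend to $a_1=2m-2$: the class $[m-1,m-1]$ with $(q+m)^2$ points is realised by $E'\times E'$ for $E'$ of type $[m-1]$ and is a jacobian by \cite{hnr}, giving $J_q(2)=(q+m)^2$ — unless $p=2$ and $p\not\vert m$, in which case $m-1$ is even but not of the exceptional shape allowed by Waterhouse's theorem \cite{water}, so the curve of type $[m-1]$ does not exist and the class $[m-1,m-1]$ is not realisable; we then pass to $[m,m-2]$, which is realisable and a jacobian and gives $(q+1+m)(q-1+m)$. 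Finally, for $q=4$ and $q=9$ the first rows fail because $N_q(2)$ is too small ($N_4(2)=10$, $N_9(2)=20$) and the relevant split classes are exceptions of \cite{hnr}; running the finite descent through Table~\ref{tabmax} and then through the remaining pairs $(a_1,a_2)$ of (\ref{a12}) one arrives at the class $t^4+5t^3+13t^2+20t+16$ with $55$ points when $q=4$ and at $[m-1,m-1]$ with $225=(q+m)^2$ points when $q=9$.

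The main obstacle is the case-by-case bookkeeping rather than any single idea: for each candidate row one must combine correctly the realizability criterion of \cite{ruck} and \cite{mana} (which needs real care precisely in the non-ordinary cases, i.e.\ when $p\vert a_2$), the list of isogeny classes without a jacobian from \cite{hnr}, and Serre's special-number dichotomy for $N_q(2)$ \cite{serre2}, and then check that the first surviving row is the one claimed — as well as that, except for $q=4$ and $q=9$, one never has to leave the rows already displayed in Table~\ref{tabmax}. A further subtlety to keep track of is that several of the ``no jacobian'' steps are equivalent to the statement that no optimal curve is of the corresponding split type (the phenomenon flagged in Remark~2), so the argument must stay consistent with Serre's description of $N_q(2)$ throughout.
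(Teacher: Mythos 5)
Your overall strategy coincides with the paper's: descend through Table \ref{tabmax}, test each row for being a characteristic polynomial of an abelian surface via \cite{ruck} and \cite{mana}, test the isogeny class for containing a jacobian via \cite{hnr}, and feed in Serre's results on $N_q(2)$ and special numbers. Most steps match the paper's proof. There is, however, one genuine gap, and it sits exactly in the case that produces the fourth value $(q+1+m)(q-1+m)$. When $q$ is special, $\{2\sqrt q\}<\frac{\sqrt5-1}{2}$, $p=2$ and $p\nmid m$ (i.e.\ $q=2^5$ or $2^{13}$), you discard the row $[m-1,m-1]$ on the grounds that Waterhouse forbids an elliptic curve of trace $-(m-1)$, ``so the class $[m-1,m-1]$ is not realisable.'' That inference is invalid: the non-existence of the elliptic curve only rules out the split surface $E'\times E'$; the isogeny class with real Weil polynomial $(t+m-1)^2$ could a priori still contain a \emph{simple} abelian surface with characteristic polynomial $(t^2+(m-1)t+q)^2$. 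Excluding that possibility requires the Honda--Tate computation of the dimension of the simple abelian variety attached to the Weil number $\pi$ with $\pi+\bar\pi=-(m-1)$; the paper invokes \cite{mana}, Prop.~2.5, which shows this dimension is a multiple of $5$ (resp.\ $13$) for $q=2^5$ (resp.\ $2^{13}$), so the class is empty in dimension $2$. Without this step your descent could legitimately stop one row too early and return $(q+m)^2$ instead of $(q+1+m)(q-1+m)$.

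A secondary flaw: in the golden-ratio case you assert $p\nmid a_2$ for $a_2=m^2-m-1+2q$ ``since $p\mid q$.'' That only reduces the claim to $p\nmid m^2-m-1$, which is true for special $q$ but requires a case check on the four special conditions; it is not automatic. (The paper avoids this by quoting Serre's existence statement from \cite{serre2} outright, keeping only the inequality $\{2\sqrt q\}\ge\frac{\sqrt5-1}{2}$ as the visible necessary condition.) Likewise, for $q=4$ the decisive content is the elimination of $(a_1,a_2)=(5,14)$ via \cite{hnr} and the verification that $(5,13)$ yields a simple surface which is a jacobian; your ``running the finite descent'' compresses precisely the part of the argument where the value $55$ is actually established.
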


\begin{proof}

\textbf{a) If $q$ is a square.}

\noindent
$\bullet$ Let $q\neq 4,9$.  Since $N_q(2)$ is the Serre-Weil bound (see \cite{serre}), by Proposition \ref{JqgetNqg} a curve reaching this bound has type $[m,m]$ (see \cite{serre2} for an explicit construction).

\medskip
\noindent
$\bullet$ Let $q=4$. First we prove that $J_4(2)\leq 55$. We have $m=4$ and $q+1+2m=13$. Every curve $X$ of genus  $2$ over $\mathbb{F}_q$ is hyperelliptic and therefore the number of rational points is at most $2(q+1)=10=13-3$. Since $\# X({\mathbb F}_q)=q+1+a_1$, we deduce that a jacobian of dimension 2 over  $\mathbb{F}_4$ must have $a_1\leq 2m-3=5$. 

If $a_1=5$ then by (\ref{a12}) we have:  $a_2\leq 14$. The real Weil polynomial ${\tilde f}_A(t)$ of an abelian variety $A$ over ${\mathbb F}_4$ with $a_1=5$ and $a_2=14$ is
\begin{eqnarray*}
{\tilde f}_A(t)=t^2+a_1 t+a_2-2q & = & t^2+5t+6\\
& = &  (t+3)(t+2)\\
& = &  (t+(m-1))(t+(m-2)).
\end{eqnarray*}
This kind of polynomial is never the real Weil polynomial of a jacobian  (because $(m-1)-(m-2)=1$, see \cite{hnr}). Thus we have $a_2\leq 13$ and a jacobian surface over ${\mathbb F}_4$ with $a_1=5$ has at most  $q^2+1+5(q+1)+13=55$ points by relation (\ref{abeliansurface}).

If $a_1<5$, we have
\begin{eqnarray*}
q^2+1+(q+1)a_1+a_2 & \leq & q^2+1+(q+1)a_1+\frac{{a_1}^2}{4}+2q\\
& = & 25+5a_1+\frac{{a_1}^2}{4}\\
& \leq & 25+5\times 4+\frac{4^2}{4}\\
& = & 49
\end{eqnarray*}
(for the third row, note that the function $x\mapsto 5x+\frac{{x}^2}{4}$ is increasing on $[-8,4]$ (and $a_1\geq -8$)). Thus  an abelian surface over ${\mathbb F}_4$ with $a_1<5$ has less than  $55$ points, hence $J_4(2)\leq 55$.

It remains to prove that $J_4(2)\geq 55$. It is enough to prove that there exists a jacobian with $(a_1,a_2)=(5,13)$ (because such a jacobian will have $q^2+1+5(q+1)+13=55$ points). There exists an abelian surface with $(a_1,a_2)=(5,13)$ (because $p=2$ does not divide $13$). Its  real Weil  polynomial is 
\begin{eqnarray*}
{\tilde f}_A(t)=t^2+5 t+13-2q & = & t^2+5t+5\\
& = &  (t+\frac{5+\sqrt 5}{2})(t+\frac{5-\sqrt 5}{2})\\
& = &  (t+(m-\frac{3+\sqrt 5}{2}))(t+(m-\frac{3-\sqrt 5}{2})).
\end{eqnarray*}
Therefore its $x_i$'s are not integers and thus this abelian surface is simple. Finally, using  \cite{hnr}, we find that  it is isogenous to a jacobian. 

\medskip
\noindent
$\bullet$ If $q=9$, we have $m=6$ and $q+1+2m=22$. Moreover, $2(q+1)=20=22-2$. Hence, we must have $a_1\leq 2m-2$. The highest row of Table \ref{tabmax} such that $a_1 = 2m-2$ is that with type $[m-1,m-1]$. By \cite{water} there exists an elliptic curve of trace $-(m-1)$ and by \cite{hnr} the product of two copies of this curve is isogenous to a jacobian and has $(q+m)^2=225$ points.

\bigskip
\noindent
\textbf{b) If $q$ is not a square.}

In \cite{serre2}, Serre proved the following facts:

\medskip
\noindent
$\bullet$ There exists a jacobian of type $[m,m]$ if and only if $q$ is not special.

\medskip
\noindent
$\bullet$ An abelian surface of type $[m,m-1]$ is never a jacobian.

\medskip
\noindent
$\bullet$ If $q$ is special, then there exists a jacobian of type  $[m+\frac{-1+\sqrt{5}}{2},m+\frac{-1-\sqrt{5}}{2}]$ if and only if  $\{ 2\sqrt q\}\geq\frac{\sqrt 5 -1}{2}$. Note that $\{ 2\sqrt q\}\geq\frac{\sqrt 5 -1}{2}$ is equivalent to $m+\frac{-1+\sqrt{5}}{2}\leq 2\sqrt q$, thus it is obvious that this condition is necessary.

\medskip
\noindent
$\bullet$ If $q$ is special, $\{ 2\sqrt q\}<\frac{\sqrt 5 -1}{2}$, $p\neq 2$ or $p\vert m$, then there exists a jacobian of type $[m-1,m-1]$.

\medskip
\noindent
$\bullet$ If $q$ is special, $\{ 2\sqrt q\}<\frac{\sqrt 5 -1}{2}$, $p=2$ and $p\not\vert m$, that is, $q=2^5$ or $2^{13}$ (for $q=2^3$, we have $\{ 2\sqrt q\}\geq\frac{\sqrt 5 -1}{2}$), then there exists a jacobian of type $[m,m-2]$.

\medskip

It remains to prove that for $q=2^5$ and $2^{13}$, there does not exist a jacobian of type $[m-1,m-1]$. In fact, when $q=2^5$ and $2^{13}$, an abelian variety with all the $x_i$'s equal to $(m-1)$ must have a dimension respectively multiple of $5$ and $13$ (see \cite{mana}, Prop. 2.5).

\end{proof}

\bigskip

The complete set of possible values of $j_q(2)$ is given in the following theorem.

\begin{theorem}\label{jq2}
\noindent
\begin{itemize}
\item[a)] If $q$ is a square, $j_q(2)$ equals:
\begin{itemize}
\item[$\bullet$] $(q+1-m)^2$ if $q\neq 4,9$
\item[$\bullet$] $5$ if $q=4$
\item[$\bullet$] $25$ if $q=9$
\end{itemize}
\noindent
\item[b)] If $q$ is not a square, $j_q(2)$ equals:
\begin{itemize}
\item[$\bullet$] $(q+1-m)^2$ if $q$ is not special
\item[$\bullet$] $(q+1-m+\frac{1+\sqrt{5}}{2})(q+1-m+\frac{1-\sqrt{5}}{2})$ if $q$ is special and $\{ 2\sqrt q\}\geq\frac{\sqrt 5 -1}{2}$
\item[$\bullet$] $(q+2-m-\sqrt{2})(q+2-m+\sqrt{2})$ if $q$ is special and $\sqrt{2}-1\leq\{ 2\sqrt q\}<\frac{\sqrt 5 -1}{2}$
\item[$\bullet$] $(q+1-m)(q+3-m)$ if $q$ is special, $\{ 2\sqrt q\} <\sqrt 2-1 $, $p\not\vert m$ and $q\neq 7^3$
\item[$\bullet$] $(q+2-m)^2$ otherwise.
\end{itemize}
\end{itemize}
\end{theorem}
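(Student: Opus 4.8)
The plan is to imitate the proof of Theorem~\ref{Jq2}, reading Table~\ref{tabmin} in place of Table~\ref{tabmax}, and to exploit quadratic twists in order to import the results of \cite{serre2} and \cite{hnr} recalled there. Recall that if $X$ is a smooth projective genus~$2$ curve over $\mathbb{F}_q$ with characteristic polynomial coefficients $(a_1,a_2)$, then its quadratic twist $X'$ is again such a curve, with coefficients $(-a_1,a_2)$, and $J_{X'}$ is the quadratic twist of $J_X$; hence an isogeny class of abelian surfaces with invariants $(a_1,a_2)$ contains a Jacobian if and only if the one with invariants $(-a_1,a_2)$ does. Since Table~\ref{tabmin} is obtained from Table~\ref{tabmax} by replacing $(a_1,a_2,[x_1,x_2])$ with $(-a_1,a_2,[-x_1,-x_2])$ (the interval for $a_2$ in (\ref{a12}) depending only on $|a_1|$), every statement of \cite{serre2} quoted in the proof of Theorem~\ref{Jq2} about a type $[m,\dots]$ transfers to the corresponding type $[-m,\dots]$. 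Minimizing $\#J_X(\mathbb{F}_q)=q^2+1+(q+1)a_1+a_2$ then amounts to running down Table~\ref{tabmin} and checking, for each row in turn: that the displayed type genuinely satisfies $|x_i|\leq 2\sqrt q$ (this is where the rows involving $\sqrt2$ and $\sqrt3$ drop out when $\{2\sqrt q\}$ is small, and it also forces the isogeny class to be empty when the corresponding elliptic factors do not exist, by Waterhouse \cite{water}); that the polynomial is the characteristic polynomial of an abelian surface, for which it suffices by \cite{ruck}, \cite{mana} that $p$ does not divide $a_2$; and that the isogeny class contains a Jacobian, which we decide with \cite{hnr}. The first row passing all three tests gives $j_q(2)$.

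When $q$ is not a square and not special, Serre's construction yields a Jacobian of type $[m,m]$, hence by twisting one of type $[-m,-m]$, which realizes the top row $(q+1-m)^2$ of Table~\ref{tabmin}. When $q$ is special there is no Jacobian of type $[m,m]$, so we pass to $a_1=-2m+1$: the row of it with fewer points is the twist of $[m+\tfrac{-1+\sqrt5}{2},m+\tfrac{-1-\sqrt5}{2}]$, which is an admissible type exactly when $\{2\sqrt q\}\geq\tfrac{\sqrt5-1}{2}$ and, in that case, is a Jacobian by Serre together with twisting, giving the second value; if $\{2\sqrt q\}<\tfrac{\sqrt5-1}{2}$ this type is not admissible, the other row (the twist of $[m,m-1]$) is never a Jacobian, and we move on to $a_1=-2m+2$.

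The block $a_1=-2m+2$, listed by increasing number of points as $[-m+1+\sqrt3,\dots]$, $[-m+1+\sqrt2,\dots]$, $[-m,-m+2]$, $[-m+1,-m+1]$, is the heart of the matter. Since $\sqrt3-1>\tfrac{\sqrt5-1}{2}>\{2\sqrt q\}$ in this regime, the $\sqrt3$-row is never admissible; the $\sqrt2$-row is admissible exactly when $\{2\sqrt q\}\geq\sqrt2-1$, and then it is a Jacobian by \cite{hnr}, which yields $(q+2-m-\sqrt2)(q+2-m+\sqrt2)$ in the range $\sqrt2-1\leq\{2\sqrt q\}<\tfrac{\sqrt5-1}{2}$. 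If $\{2\sqrt q\}<\sqrt2-1$ we reach $[-m,-m+2]$, the twist of $[m,m-2]$: its isogeny class is nonempty only if there exist elliptic curves of traces $m$ and $m-2$ (Waterhouse \cite{water}), and when it is nonempty \cite{hnr} tells us it contains a Jacobian precisely when $p\not\vert m$ and $q\neq 7^3$; in particular it is empty for $q=7^3$ because then $p\vert (m-2)$ with $e$ odd, whereas for $q=2^5,2^{13}$ it does contain a Jacobian (up to twist, the very curve used for these $q$ in the proof of Theorem~\ref{Jq2}). This gives $(q+1-m)(q+3-m)$. In every remaining case the first admissible Jacobian is $[-m+1,-m+1]$, the twist of $[m-1,m-1]$, whose isogeny class is that of $E\times E$ with $E$ of trace $m-1$ and which contains a Jacobian by \cite{hnr} (using \cite{water} for the existence of $E$), giving $(q+2-m)^2$; here one must still check that all lighter rows fail, e.g. for $q=2^5,2^{13}$ by invoking \cite{mana}, Prop.~2.5, exactly as in Theorem~\ref{Jq2}.

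Finally, the square cases: for $q$ a square with $q\neq4,9$, Serre's optimal curves of type $[m,m]$ twist to Jacobians of type $[-m,-m]$, so $j_q(2)=(q+1-m)^2$, the top row of Table~\ref{tabmin}. For $q=4$ and $q=9$ one repeats the dedicated arguments of the proof of Theorem~\ref{Jq2} with signs reversed: every genus~$2$ curve over $\mathbb{F}_4$ (resp. $\mathbb{F}_9$) is hyperelliptic, so applying the degree-$2$ map to $\mathbb{P}^1$ to its quadratic twist forces $a_1\geq -5$ (resp. $a_1\geq -10$); enumerating the admissible rows of Table~\ref{tabmin} and discarding the type $[-(m-1),-(m-2)]$ over $\mathbb{F}_4$ (which is not a Jacobian since the $x_i$ differ by $1$, by \cite{hnr}) exactly as its twist was discarded for $J_4(2)$, one gets $j_4(2)=5$ and $j_9(2)=25$. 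The main obstacle is precisely the bookkeeping in the block $a_1=-2m+2$ for $q$ special: deciding, case by case, which of the four rows is the first to be simultaneously an admissible $q$-Weil type, the characteristic polynomial of an abelian surface, and a Jacobian, and, for the answer $(q+2-m)^2$, verifying that each of the three lighter rows genuinely fails.
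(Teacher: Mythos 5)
Your overall strategy --- descend Table \ref{tabmin}, transfer Serre's existence and non-existence results by quadratic twist, and test each row with \cite{ruck}, \cite{mana}, \cite{water} and \cite{hnr} --- is exactly the paper's. But several of the individual verifications are skipped or carried out on the wrong object, and at least two of these are genuine gaps. The clearest is $q=4$: the row you must exclude to get $j_4(2)\ge 5$ is \emph{not} the twist of the row excluded for $J_4(2)$. For $a_1=-5$, (\ref{a12}) gives $12\le a_2\le 14$ and the point count is $a_2-8$, so the dangerous row is $(a_1,a_2)=(-5,12)$, with $4$ points and real Weil polynomial $(t-m)(t-(m-3))$, i.e.\ type $[-4,-1]$. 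Its twist $(5,12)$ has only $54$ points and never appeared in the $J_4(2)$ argument, so nothing can be imported; the paper excludes $(-5,12)$ by the almost-ordinary criterion of \cite{hnr} ($m^2=4q$ and $m-(m-3)=3$ squarefree), not by the ``$x_i$ differ by $1$'' criterion you invoke for $[-(m-1),-(m-2)]$ (that is the row $(-5,14)$, with $6$ points, which is harmless anyway). As written, your argument does not establish $j_4(2)\ge 5$. The same issue recurs for $q=9$, where the row to kill is $[-m,-m+2]$, again via the almost-ordinary criterion, and you do not address it.

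In the block $a_1=-2m+2$ for $q$ special you treat ``admissible type'' ($|x_i|\le 2\sqrt q$) as sufficient, but step (2) of the strategy --- that the polynomial really is the characteristic polynomial of an abelian surface --- is nontrivial precisely for the $\sqrt2$-row. The paper proves it by showing that if $p\mid m$ then $p\nmid a_2=m^2-2m-1+2q$, while if $p\nmid m$ then $\{2\sqrt q\}=\frac{4q-m^2}{m+2\sqrt q}\le \frac{7}{2m}<\sqrt2-1$ once $m\ge 9$, so the case is vacuous except for finitely many small $q$ checked by hand; none of this appears in your sketch. Likewise, for the value $(q+1-m)(q+3-m)$ you assert the conclusion ``contains a Jacobian precisely when $p\nmid m$ and $q\neq 7^3$'' as if it were a quotable criterion, but the actual argument needs the arithmetic lemma that $q$ special, $p\nmid m$ and $q\neq 7^3$ force $p\nmid(m-2)$, so that Waterhouse yields elliptic curves of both traces $m$ and $m-2$; this is exactly where $7^3$ enters and it must be proved. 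Finally, your closing remark about $q=2^5,2^{13}$ is misplaced: for the minimum these fields satisfy $p\nmid m$ and land in the $(q+1-m)(q+3-m)$ case, so Prop.~2.5 of \cite{mana} plays no role there.
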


\begin{proof}

\textbf{a) If $q$ is a square.}

\noindent
$\bullet$ Using twisting arguments and the proof of Theorem \ref{Jq2}, we see that if $q\neq 4,9$, there exists a jacobian of type $[-m,-m]$. 

\medskip
\noindent
$\bullet$ Let $q=4$ (and $m=4$). First we prove that $j_4(2)\geq 5$. We have $a_1\geq -5$ since the quadratic twist of a curve with $a_1<-5$ would have $a_1>5$ which is not possible (see the proof of Theorem \ref{Jq2}).

If $a_1=-5$ then by (\ref{a12}) we have $a_2\geq 12$. The real Weil polynomial of an abelian variety $A$ over ${\mathbb F}_4$ with $a_1=-5$ and $a_2=12$ is
\begin{eqnarray*}
{\tilde f}_A(t)=t^2+a_1 t+a_2-2q & = & t^2-5t+4\\
& = &  (t-4)(t-1)\\
& = &  (t-m)(t-(m-3)).
\end{eqnarray*}
This is not the real Weil polynomial of a jacobian (it is one of an almost ordinary abelian surface, $m^2=4q$ and $m-(m-3)$ is squarefree, see \cite{hnr}). Thus we have $a_2\geq 13$ and a jacobian  surface with $a_1=-5$ has at least $q^2+1-5(q+1)+13=5$  points.

If $a_1>-5$, we have
\begin{eqnarray*}
q^2+1+(q+1)a_1+a_2 & \geq & q^2+1(q+1)a_1+2\vert a_1\vert\sqrt{q}-2q\\
& = &  9+5a_1+4\vert a_1\vert \\
& \geq &  9+5\times (-4)+4\times 4 \\
& = & 5
\end{eqnarray*}
(for the third row, note that the function $x\mapsto 5x+4\vert x\vert$ is increasing on $[-4,8]$). Thus an abelian surface with $a_1>-5$ has more than $5$ points and our result is proved.

It remains to prove that $j_4(2)\leq 5$. There exists a jacobian with $(a_1,a_2)=(-5,13)$: this is the jacobian of the quadratic twist of the curve with $(a_1,a_2)=(5,13)$ in the proof of Theorem \ref{Jq2}. This jacobian has $q^2+1-5(q+1)+13=5$  points.

\medskip
\noindent
$\bullet$ If $q=9$ (and $m=6$), using the same argument as in the last step, we must have $a_1 \geq -2m+2$. We look at the rows of Table \ref{tabmin}, beginning by the rows on the top, for which $a_1 = -2m+2$. The first two can be ignored since $\{ 2\sqrt q\} =0$ is less than $\sqrt 3 -1$ and less than $\sqrt 2-1$. An abelian surface of type $[-m,-m+2]$  is not  a jacobian (this is an almost ordinary abelian surface, $m^2=4q$ and $m-(m-2)$ is squarefree, see \cite{hnr}). The product  of two copies of an elliptic curve of trace $(m-1)$  is isogenous to a jacobian (such a curve exists since $3\not\vert (m-1)$).

\bigskip
\noindent
\textbf{b) If $q$ is not a square.}

Using twisting arguments and the proof of Theorem \ref{Jq2}, we see that:

\medskip
\noindent
$\bullet$ There exists a jacobian of type $[-m,-m]$ if and only if $q$ is not special.

\medskip
\noindent
$\bullet$ If $q$ is special, there exists a jacobian of type  $[-m+\frac{1-\sqrt{5}}{2},-m+\frac{1+\sqrt{5}}{2}]$ if and only if  $\{ 2\sqrt q\}\geq\frac{\sqrt 5 -1}{2}$.

\medskip
\noindent
$\bullet$ An abelian surface of type $[-m,-m+1]$ is never a jacobian.

\medskip

Now suppose that $q$ is special and $\{ 2\sqrt q\} <\frac{\sqrt 5 -1}{2}$.

\medskip
\noindent
$\bullet$ In order to have the existence of an abelian surface of type $[-m+1+\sqrt{3},-m+1-\sqrt{3}]$, it is necessary to have $m-1+\sqrt{3}\leq 2\sqrt q$ which is equivalent to $\{ 2\sqrt q\} \geq\sqrt 3 -1$. When $\{ 2\sqrt q\}<\frac{\sqrt 5 -1}{2}$, this condition is never satisfied  (since $\frac{\sqrt 5 -1}{2}<\sqrt 3 -1$).

\medskip
\noindent
$\bullet$ In order to have the existence of an abelian surface of type $[-m+1+\sqrt{2},-m+1-\sqrt{2}]$,   it is necessary to have   $\{ 2\sqrt q\} \geq\sqrt 2 -1$.

Suppose that this condition holds; we will show that there exists an abelian surface of type $[-m+1+\sqrt{2},-m+1-\sqrt{2}]$. We use the same kind of argument as Serre used in \cite{serre2}.
If $p\vert m$, we are done since $p\not\vert a_2=m^2-2m-1+2q$. Otherwise, $(m-2\sqrt q)(m+2\sqrt q)=m^2-4q\in\{ -3,-4,-7\}$, hence $\{ 2\sqrt q\} =2\sqrt q -m=\frac{4q-m^2}{m+2\sqrt q}\leq\frac{7}{2m}$ and if $m\geq 9$, $\frac{7}{2m}<\sqrt{2}-1$. 
It remains to consider by hand the powers of primes of the form $x^2+1$, $x^2+x+1$ and $x^2+x+2$ with $m<9$ (i.e. $q<21$). These powers of primes are precisely 
 $2$, $3$, $4$, $5$, $7$, $8$, $13$ and $17$. 
 For $q=2,8$, we have $\{ 2\sqrt q\}\geq\frac{\sqrt 5 -1}{2}$. For $q=3$, we have $p\vert m$. For $q=4,7,13,17$, we have $\{ 2\sqrt q\} <\sqrt{2}-1$. For $q=5$, $m=4$ and $p=5$ do not divide $a_2=m^2-2m-1+2q=17$. So we are done.

Finally, using \cite{hnr}, we conclude that this abelian surface is isogenous to a jacobian.

\medskip
\noindent
$\bullet$ We suppose that $q$ is special, $\{ 2\sqrt q\} <\sqrt 2-1 $, $p\not\vert m$ and $q\neq 7^3$. Then $p\not\vert (m-2)$.

To see this, we take $p\neq 2$ (if $p=2$, it is obvious) and we use the remark after the definition of "special". Suppose that $p$ divides $(m-2)$, then it also divides $m^2-4-4q=(m+2)(m-2)-4q$. Since $p\neq 2$, we must have $m^2-4q\in\{-3,-4\}$. If $m^2-4q=-3$, $p$ divides $-3-4=-7$ thus $p=7$; $q$ is not prime (since for $q=7$, $p\not\vert (m-2)=5$) therefore we must have $q=7^3$ and this case is excluded. If $m^2-4q=-4$, $p$ divides $-4-4=-8$ thus $p=2$ which contradicts our assumption. Thus the result is proved.

Finally, by \cite{water} there exist elliptic curves of trace $m$ and $(m-2)$ and by \cite{hnr} their product is isogenous to a jacobian.

\medskip
\noindent
$\bullet$ We suppose that $q$ is special, $\{ 2\sqrt q\} <\sqrt 2-1 $ and $p\vert m$, or $q=7^3$. By \cite{water}, if $p\vert m$, there does not exist an elliptic curve of trace $m$ ($q=2$ and $3$ are excluded since in those cases, $\{ 2\sqrt q\} \geq\sqrt 2-1 $). If $q=7^3$ (thus $(m-2)=35$) there does not exist an elliptic curve of trace $(m-2)$. Therefore, in both cases, an abelian surface of type $[-m,-m+2]$ cannot exist. 

\medskip
\noindent
$\bullet$ If $q$ is special, $\{ 2\sqrt q\} <\sqrt 2-1 $ and $p\vert m$ or $q= 7^3$, then there exists a jacobian of type $[-m+1,-m+1]$: this is the jacobian of the quadratic twist of the curve of type $[m-1,m-1]$ in the proof of Theorem \ref{Jq2}.
\end{proof}

%%%%%%%%%%%%%%%%%%%%%%%%%%%%%%%%%%%%%%%%%%%%%%%%%%%%%%%%

% ASYMPTOTICS for JACOBIANS

%%%%%%%%%%%%%%%%%%%%%%%%%%%%%%%%%%%%%%%%%%%%%%%%%%%%%%%%

\section{Asymptotic parameters for jacobians}

Let us consider the following aymptotic quantities:
$$J_{q}(\infty)= \limsup_{g\rightarrow\infty}(\# J_q(g))^{1/g}\quad \mbox{ and }\quad j_{q}(\infty)= \liminf_{g\rightarrow\infty}(\# J_q(g))^{1/g}.$$

\begin{proposition} We have
$$q\leq j_q(\infty)\leq J_{q}(\infty)\leq q+\sqrt{q}\leq q+2\sqrt{q}+1.$$
\end{proposition}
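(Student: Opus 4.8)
The plan is to establish the four inequalities separately, exploiting the bounds already available in the excerpt together with a standard lower bound coming from the existence of curves with at least one rational point.

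First I would dispose of the trivial chain $j_q(\infty)\le J_q(\infty)$, which is immediate from $j_q(g)\le J_q(g)$ for every $g$, and of the last inequality $q+\sqrt q\le q+2\sqrt q+1=(\sqrt q+1)^2$, which is obvious. So the real content is the outer pair: $q\le j_q(\infty)$ on the left and $J_q(\infty)\le q+\sqrt q$ on the right.

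For the upper bound $J_q(\infty)\le q+\sqrt q$, I would start from the Serre–Weil estimate of Corollary \ref{WeilJacobian}, namely $J_q(g)\le (q+1+m)^g$ with $m=[2\sqrt q]\le 2\sqrt q$. This gives $(J_q(g))^{1/g}\le q+1+m\le q+1+2\sqrt q=(\sqrt q+1)^2$, which already yields the crude bound $J_q(\infty)\le q+2\sqrt q+1$ but \emph{not} the sharper $q+\sqrt q$. To get $q+\sqrt q$ one must use an asymptotically better bound than the pointwise Weil bound; the natural tool is the explicit-formula / Drinfeld–Vl\u{a}du\c{t}-type estimate. Concretely, combining Proposition \ref{JqgetNqg}, which gives $J_q(g)\le\bigl(q+1+\tfrac{N_q(g)-(q+1)}{g}\bigr)^g$, with the asymptotic bound $N_q(g)\le g(\sqrt q-1)+o(g)$ coming from the Drinfeld–Vl\u{a}du\c{t} bound $\limsup N_q(g)/g\le\sqrt q-1$, one obtains $(J_q(g))^{1/g}\le q+1+\tfrac{N_q(g)-(q+1)}{g}\to q+1+(\sqrt q-1)=q+\sqrt q$, hence $J_q(\infty)\le q+\sqrt q$. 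The main obstacle here is that this requires invoking the Drinfeld–Vl\u{a}du\c{t} bound (or an equivalent explicit-formula argument), which is not proved in the excerpt; I would simply cite it. One should double-check that the inequality in Proposition \ref{JqgetNqg} is applied with the right sign, i.e. that $x\mapsto (q+1+\tfrac{x-(q+1)}{g})^g$ is increasing so that replacing the true value by an upper bound on $N_q(g)$ is legitimate.

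For the lower bound $q\le j_q(\infty)$, the idea is that a jacobian of dimension $g$ over $\mathbb F_q$ always has ``close to $q^g$'' rational points. I would use the left-hand Weil-type bound, say $\#J_X(\mathbb F_q)\ge (\sqrt q-1)^{2g}$ from Corollary \ref{WeilJacobian} (via $(q+1-m)^g\ge(q+1-2\sqrt q)^g=(\sqrt q-1)^{2g}$) — but this only gives $j_q(\infty)\ge(\sqrt q-1)^2$, which is weaker than $q$ for small $q$. The sharper route is the Lachaud–Martin-Deschamps lower bound quoted in the excerpt:
\[
\#J_X(\mathbb F_q)\ \ge\ (\sqrt q-1)^2\,\frac{q^{g-1}-1}{g}\,\frac{\#X(\mathbb F_q)+q-1}{q-1}.
\]
Choosing $X$ to be \emph{any} curve of genus $g$ over $\mathbb F_q$ (such curves exist for all $g\ge1$, e.g. hyperelliptic ones, and they have at least one rational point, or at worst $\#X(\mathbb F_q)\ge 0$ suffices), the factor $\frac{\#X(\mathbb F_q)+q-1}{q-1}\ge 1$, so $j_q(g)\ge(\sqrt q-1)^2\,\frac{q^{g-1}-1}{g}$. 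Taking $g$-th roots, $(j_q(g))^{1/g}\ge\bigl((\sqrt q-1)^2\bigr)^{1/g}\,\bigl(\tfrac{q^{g-1}-1}{g}\bigr)^{1/g}$, and since $\bigl(\tfrac{q^{g-1}-1}{g}\bigr)^{1/g}\to q$ and $\bigl((\sqrt q-1)^2\bigr)^{1/g}\to 1$ as $g\to\infty$, we get $\liminf (j_q(g))^{1/g}\ge q$, i.e. $q\le j_q(\infty)$. The main obstacle is making sure a curve of each genus exists over $\mathbb F_q$ and that the Lachaud–Martin-Deschamps bound applies uniformly; both are standard, so I would cite \cite{L-MD} and finish by assembling the four inequalities into the displayed chain.
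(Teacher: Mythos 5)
Your proposal is correct and follows essentially the same route as the paper: the lower bound $q\le j_q(\infty)$ via the Lachaud--Martin-Deschamps inequality, and the upper bound $J_q(\infty)\le q+\sqrt q$ via Proposition \ref{bornetr} (through the bound (\ref{pirrete})) combined with the Drinfeld--Vl\u{a}du\c{t} bound on $N_q(g)/g$, which the paper leaves implicit but you rightly make explicit. The remaining two inequalities are, as you say, immediate.
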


\begin{proof}
The first inequality comes from the lower bound of Lachaud-Martin-Deschamps, the third one from Proposition \ref{bornetr} and the last one is the Weil bound.
\end{proof}

For $q$ square, we have the following bound proved by Vl\u{a}du\c{t} \cite{vla}.

\begin{proposition} If $q$ is a square,
$$q(\frac{q}{q-1})^{\sqrt{q}-1}\leq J_{q}(\infty).$$
\end{proposition}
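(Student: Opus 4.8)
The plan is to realise the Drinfeld--Vl\u{a}du\c{t} bound by an explicit family of curves over $\mathbb{F}_q$ and to control the number of points on their jacobians through the function-field class number formula. Write $q=\ell^2$. By Ihara's theorem (equivalently, by the Tsfasman--Vl\u{a}du\c{t}--Zink construction from modular or Shimura curves), there is a family of curves $X_i/\mathbb{F}_q$ with $g_i=g(X_i)\to\infty$ and
$$\frac{\#X_i(\mathbb{F}_q)}{g_i}\longrightarrow \sqrt q-1,$$
so that the Ihara constant $A(q)=\sqrt q-1$ is attained; this is the one substantial external input, and it is exactly the place where the hypothesis that $q$ is a square is used.

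Next I would record the following identity, valid for any curve $X/\mathbb{F}_q$ of genus $g$. Writing $N_k=\#X(\mathbb{F}_{q^k})$ and $P_X(T)=\prod_{i=1}^{2g}(1-\alpha_iT)$ for the numerator of the zeta function, one has $\#J_X(\mathbb{F}_q)=P_X(1)$; expanding $\log P_X(1)=\sum_i\log(1-\alpha_i)$, and using the functional equation in the form $\prod_i\alpha_i=q^g$ together with $\sum_i\alpha_i^k=q^k+1-N_k$ and $\alpha_i^{-1}=\overline{\alpha_i}/q$, one obtains
$$\log\#J_X(\mathbb{F}_q)=g\log q+\sum_{k\geq1}\frac{N_k-q^k-1}{kq^k},$$
the series converging absolutely since $|N_k-q^k-1|\leq 2g\,q^{k/2}$ by the Riemann hypothesis. (As a check, on an elliptic curve the right-hand side collapses to $\log q+\log(\#E(\mathbb{F}_q)/q)$.)

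Applying this with $X=X_i$ and dividing by $g_i$ gives $\frac1{g_i}\log\#J_{X_i}(\mathbb{F}_q)=\log q+\sum_{k\geq1}b_k^{(i)}$, where $b_k^{(i)}=\bigl(N_k(X_i)/g_i-(q^k+1)/g_i\bigr)/(kq^k)$. Since $\mathbb{F}_q\subseteq\mathbb{F}_{q^k}$ forces $N_k(X_i)\geq N_1(X_i)$ for every $k\geq1$, and $(q^k+1)/g_i\to0$, we get $\liminf_i b_k^{(i)}\geq(\sqrt q-1)/(kq^k)$; the Riemann hypothesis gives the uniform minorant $b_k^{(i)}\geq-2/(kq^{k/2})=:-c_k$ with $\sum_k c_k=-2\log(1-q^{-1/2})<\infty$. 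Hence, for each fixed $K$, $\liminf_i\sum_{k\geq1}b_k^{(i)}\geq\sum_{k=1}^{K}\liminf_i b_k^{(i)}-\sum_{k>K}c_k\geq\sum_{k=1}^{K}\frac{\sqrt q-1}{kq^k}-\sum_{k>K}c_k$, and letting $K\to\infty$ yields
$$\liminf_i\frac1{g_i}\log\#J_{X_i}(\mathbb{F}_q)\geq\log q+(\sqrt q-1)\sum_{k\geq1}\frac1{kq^k}=\log q+(\sqrt q-1)\log\frac{q}{q-1}.$$
Exponentiating, $\liminf_i\#J_{X_i}(\mathbb{F}_q)^{1/g_i}\geq q\bigl(\tfrac{q}{q-1}\bigr)^{\sqrt q-1}$; since $J_q(g_i)\geq\#J_{X_i}(\mathbb{F}_q)$ and $g_i\to\infty$, this gives $J_q(\infty)\geq q\bigl(\tfrac{q}{q-1}\bigr)^{\sqrt q-1}$.

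The main obstacle is the interchange of $\liminf_i$ with the infinite sum over $k$: the ratios $N_k(X_i)/g_i$ need not converge for $k\geq2$ (the family is only assumed asymptotically optimal over $\mathbb{F}_q$), and the crude bound $N_k\geq N_1$ applied simultaneously to all $k$ would introduce the divergent series $\sum_k\frac1k$. The resolution is that the Weil bound furnishes a summable, $i$-independent minorant $-c_k$ for the tail, so that only finitely many terms genuinely matter, on which $\liminf$ commutes with a finite sum; the remainder is a routine tail estimate (equivalently, this is Fatou's lemma for the counting measure on $\mathbb{N}$ applied to the nonnegative quantities $b_k^{(i)}+c_k$).
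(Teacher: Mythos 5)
Your argument is correct. Note first that the paper itself gives no proof of this proposition: it is simply quoted from Vl\u{a}du\c{t}'s article, so you have supplied an actual argument where the paper supplies a citation. What you have written is essentially the standard Tsfasman--Vl\u{a}du\c{t} ``asymptotic class number'' computation: the identity $\log \# J_X(\mathbb{F}_q)=g\log q+\sum_{k\ge 1}(N_k-q^k-1)/(kq^k)$ is exactly right (it follows, as you say, from $\prod_i(1-\alpha_i)=q^g\prod_i(1-\alpha_i^{-1})$, the functional equation, and the power sums $\sum_i\alpha_i^k=q^k+1-N_k$, with absolute convergence guaranteed by $|\alpha_i^{-1}|=q^{-1/2}<1$), and the external input $A(q)=\sqrt q-1$ for $q$ a square is precisely where Ihara/Tsfasman--Vl\u{a}du\c{t}--Zink (or, for arbitrary square $q$, the Garcia--Stichtenoth towers) enters. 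Two points in your write-up deserve emphasis as the genuinely load-bearing ones, and you handle both correctly: (i) you only need a \emph{lower} bound on each $N_k$, and the trivial inequality $N_k\ge N_1$ suffices because the generalized Drinfeld--Vl\u{a}du\c{t} bound forces the higher-degree contributions of an optimal family to vanish anyway, so no asymptotic exactness of the family is required; (ii) the interchange of $\liminf_i$ with the sum over $k$ is legitimate only because the Weil bound gives the $i$-independent summable minorant $-2/(kq^{k/2})$, which is your Fatou step. One last small step worth making explicit: from $\liminf_i \#J_{X_i}(\mathbb{F}_q)^{1/g_i}\ge q(q/(q-1))^{\sqrt q-1}$ you pass to $J_q(\infty)=\limsup_g J_q(g)^{1/g}$ via $J_q(g_i)\ge \#J_{X_i}(\mathbb{F}_q)$ along the subsequence $g_i\to\infty$; since a $\limsup$ over the full sequence dominates the $\liminf$ over any subsequence, this is fine. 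The proof is complete modulo the cited existence of an asymptotically optimal family.
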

However, for $q\gg 0$, we have
$$q(\frac{q}{q-1})^{\sqrt{q}-1}=q+\sqrt{q}-\frac{1}{2}+o(1).$$
Hence, for $q$ large
$$q+\sqrt{q}-\frac{1}{2}+o(1)\leq J_{q}(\infty)\leq q+\sqrt{q},$$ 
which raises to the following question.

\bigskip
\noindent
{\bf Question.}  What are the values between $q+\sqrt{q}-\frac{1}{2}$ and $q+\sqrt{q}$ which are attained by a sequence of curves reaching the Drinfeld-Vl\u{a}du\c{t} bound?

%%%%%%%%%%%%%%%%%%%%%%%%%%%%%%%%%%%%%%%%%%%%%%%%%%%%%%

\section*{Acknowledgements}
I would like to thank my thesis advisor, Yves Aubry, for his help and encouragement. I am also grateful to Hamish Ivey-Law for his careful reading of this paper.

%%%%%%%%%%%%%%%%%%%%%%%%%%%%%%%%%%%%%%%%%%%%%%%%%%%%%%%
%%%%%%%%%%%%%%%%%%%%%%%%%%%%%%%%%%%%%%%%%%%%%%%%%%%%%%%

%%%%%%%%%%%%%%

\end{document}